\documentclass[10pt,reqno,a4paper]{amsart}

\usepackage[pdftex]{graphicx}
\usepackage{pgfplots}
\usepackage{tikz}  
\usepackage{caption}

\pgfplotsset{compat=1.18}

\usepackage{amsmath}
\usepackage{amssymb,amsthm,hyperref,caption}
\usepackage{xcolor}
\definecolor{meinBlau}{rgb}{0.2,0.2,0.9} 
\definecolor{blau}{rgb}{0,0,0.75} 
\definecolor{rot}{rgb}{0.74,0,0} 
\definecolor{darkgreen}{rgb}{0,0.4,0.1} 
\definecolor{codegreen}{rgb}{0,0.6,0} 
\definecolor{codeblue}{rgb}{0,0.1,0.7} 

\definecolor{myblauA}{rgb}{0,0.1,1} 
\definecolor{myblauB}{rgb}{0,0.2,0.8} 
\definecolor{myblauC}{rgb}{0,0.3,0.7}
\definecolor{myblauD}{rgb}{0,0.4,0.55}
\definecolor{myblauE}{rgb}{0,0.5,0.4}
\definecolor{myblauF}{rgb}{0,0.6,0.3}

\hypersetup{colorlinks,linkcolor=blau,citecolor=blue,urlcolor=meinBlau}
\usepackage{times}
\usepackage{enumitem}
\usepackage{verbatim}
\usepackage{listings}
\usepackage{fancyvrb}

\lstdefinestyle{Rstyle}{
language=R, 
backgroundcolor=\color{white}, 
commentstyle=\color{codegreen}, 
keywordstyle=\color{codeblue}, 
numberstyle=\color{codeblue}, 
stringstyle=\color{codegreen}, 
basicstyle=\ttfamily\tiny, 
morekeywords={TRUE,FALSE}, 
deletekeywords={data,frame,length,as,character}, 
showstringspaces=false 
}

\allowdisplaybreaks

\newtheorem{theorem}{Theorem}
\newtheorem{lem}[theorem]{Lemma}

\newtheorem{prop}{Proposition}

\theoremstyle{definition}

\newtheorem{example}{Example}
\newtheorem{defi}{Definition}

\def\P{{\mathbb {P}}}
\def\E{{\mathbb {E}}}
\def\Var{{\mathbb {V}}}
\DeclareMathOperator{\Cov}{Cov}

\newcommand{\fallfak}[2]{\ensuremath{#1^{\underline{#2}}}}
\newcommand{\auffak}[2]{\ensuremath{#1^{\overline{#2}}}}

\newcommand{\N}{\ensuremath{\mathbb{N}}}

\DeclareMathOperator{\fc}{FC}

\DeclareMathOperator{\nlaw}{\overset{\mathcal{L}}{\neq}}
\DeclareMathOperator{\law}{\overset{\mathcal{L}}{=}}
\DeclareMathOperator{\claw}{\overset{\mathcal{L}}{\rightarrow}}

\DeclareMathOperator{\Po}{Po}
\DeclareMathOperator{\Be}{Be}

\DeclareMathOperator{\Bin}{Bin}

\DeclareMathOperator{\maxP}{maxP}

\newcommand{\fm}{\ensuremath{\mathfrak{m}}}
\newcommand{\fv}{\ensuremath{\mathfrak{v}}}

\newcommand{\veca}[2]{\ensuremath{\binom{#1}{#2}}}

\begin{document}

\author[M.~Kuba]{Markus Kuba}
\address{Markus Kuba\\
Department Applied Mathematics and Physics\\
University of Applied Sciences - Technikum Wien\\
H\"ochst\"adtplatz 5, 1200 Wien} %
\email{kuba@technikum-wien.at}

\title[Card guessing after a single shelf shuffle]{On Card guessing after a single shelf shuffle}

\keywords{Shelf shuffle, Card guessing, full feedback, distribution, limit law}%
\subjclass[2000]{05A15, 05A16, 60F05, 60C05} %

\begin{abstract}
We consider a card guessing game with complete feedback. An ordered deck of
$n$ cards labeled $1$ up to $n$ is shelf-shuffled exactly one time. 
One after the other a single card is drawn from the shuffled deck. The guesser makes has guess and the card is shown until no cards remain. We provide a distributional analysis of the number of correct guesses under the optimal strategy. We re-obtain the previously derived expectation and add a complete description of the distribution. We also obtain a central limit theorem 
for the number $n$ of cards tending to infinity. Furthermore, we discuss an unbalanced, biased shelf shuffle and show how to derive the extend our analysis, also adding the complete position matrix. Finally, a refined analysis of the number of correct guesses is carried out, distinguishing between pure luck guesses and certified correct guesses.
\end{abstract}

\maketitle

\section{Introduction}
\subsection{Shuffling techniques and card guessing games}
The shuffling of cards and associated card guessing games 
have a long and rich history in mathematics at the intersection of combinatorics, probability theory and statistics,  
dating back to work of Markov~\cite{Markov} and Poincare~\cite{Poincare}. Famous shuffling techniques include the riffle shuffle~\cite{BayerDiaconis,DiaconisGraham1981,Gilbert1955}, also called dove-tail shuffling, as well as the shelf shuffle~\cite{Clay2025a,Clay2025b}; we refer the reader to~\cite{Sellke} for references concerning other shuffling techniques such as the Thorpe shuffle and the clumpy shuffle and to~\cite{Kraitchik1942Recreations} for the Monge shuffle. Additionally, we refer to the book of Diaconis and Fulman~\cite{DiaconisFulman2023Shuffling} for many more references. 
The different card shuffling techniques and their analysis are not only of purely mathematical interest. There are applications to the analysis of clinical trials~\cite{BlackwellHodges1957,Efron1971}, fraud detection related to extra-sensory perceptions~\cite{Diaconis1978}, guessing so-called Zener Cards~\cite{OttoliniSteiner2023}, as well as relations to tea tasting and the design of statistical experiments~\cite{Fisher1936,OT2024}. In this work we analyze the shelf shuffle, as introduced by Diaconis, Fulman and Holmes~\cite{DiaconisFulmanHolmes2013}.
Its importance stems from card shuffling machines called shelf-shuffler, which play an important role in the theory of card guessing,
as sch machines have direct practical applications to casino gambling and manufacturing. A standard shuffling machine employs ten shelves. In a shuffling machine the cards are drawn from the top of the deck. Each card is placed, uniformly at random, on a shelf. The decision to place the card
on the top or bottom of the existing pile on the shelf is also done randomly. The shelves
are then removed and the cards are pressed together, forming a shuffled deck. 
Clay~\cite{Clay2025a} analyzed properties of a single shelf shuffle, including a card guessing game with complete feedback.
An ordered deck of $n$ cards labeled $1$ up to $n$ is shelf-shuffled exactly one time.
Then, one after the other a single card is drawn from the shuffled deck. The guesser wants to correctly guess the label of the drawn cards to maximze the number of correct guesses. 
The person gets full feedback after the guess: the card is always shown until no cards remain. Let $X_n$ denote the number of correct guesses in the full feedback card guessing game
after a single shelf shuffle. Clay obtained the expected value of $X_n$ and showed that $\E(X_n)=\frac34\cdot n$. 
We note here in passing that similar questions recently received very much attention and mention a few corresponding works for the riffle shuffle~\cite{Ciucu1998,KSTY2023,NFNW-KT2022,KT2023,Liu2021}
and also refer to the recent work of Tripathi~\cite{Tripathi2026} and of Clay~\cite{Clay2025b} for related problems. The purpose of this work is to extend the analysis of $X_n$ and to obtain the distribution, the variance, as well as a central limit theorem for $X_n$. Additionally, we introduce a biased, asymmetric shelf shuffle
with parameter $p\in(0,1)$ and show how to extend the analysis to this case and also uncovering a phase transition for $p\to 1$. Finally, motivated by other card guessing games~\cite{KP2024} we also discuss a refined analysis of $X_n$: correct guesses are either pure luck $L_n$, as they occur with probability one-half, or they are certified correct guesses $C_n$ with probability one. Thus, the random variable $X_n$ decomposes into the random variables $L_n$ and $C_n$, counting pure luck and certified correct guesses, respectively:
\[
X_n=L_n+C_n.
\]
We study the joint distribution of both random variables, shedding more light into the distribution of $X_n$; in particular, we prove
that exactly $n/2$ guesses are expected to certified correct and $n/4$ to be pure luck, refining $\E(X_n)=\frac34\cdot n$.

\subsection{Notation}
As a remark concerning notation used throughout this work, we always write $X \law Y$ to express equality in distribution of two random variables (r.v.) $X$ and $Y$, and $X_{n} \claw X$ for the weak convergence (i.e., convergence in distribution) of a sequence
of random variables $X_{n}$ to a r.v.\ $X$. Furthermore we use $\fallfak{x}{s}:=x(x-1)\dots(x-(s-1))$ for the falling factorials, and $\auffak{x}{s}:=x(x+1)\dots(x+s-1)$ for the rising factorials, $s\in\N_0$. Moreover, $f_{n} \ll g_{n}$ denotes that a sequence $f_{n}$ is asymptotically smaller than a sequence $g_{n}$, i.e., $f_{n} = o(g_{n})$, $n \to \infty$, and $f_{n} \gg g_{n}$ the opposite relation, 
$g_{n} = o(f_{n})$.

\section{Shelf Shuffling}
We begin by recalling the definition of the shelf shuffle with a single shelf. 

\begin{defi}[Single-shelf shuffle]
Given an ordered deck of $n$ cards labelled one up to $n$. Cards are drawn from the bottom of the deck and placed in a pile or on a shelf. 
When a card is drawn, it is placed at the top or at the bottom of the pile with probability 1/2.
\end{defi}

An important intermediate result for our analysis is the so-called position matrix, encoding the positions of each card after one shelf shuffle.
We collect the result of~\cite{Clay2025a}. 
\begin{lem}[Position matrix of a single shelf shuffle]
\label{lem:posi}
Suppose a deck of $n$ cards is shuffled once in a shelf-shuffler with one shelf.
Let $m_{i,j}$ be the probability that card $i$ lands in position $j$.
Then, we have
\[
m_{i,j}
=
\frac{\binom{i-1}{j-1} + \binom{i-1}{n-j}}{2^i},
\qquad
1 \le i \le n,
\quad
1 \le j \le n,
\]
with the convention that $\binom{0}{0} = 1$ and $\binom{0}{k} = 0$
for all positive integers $k$.
In particular,
$m_{i,j} = 0$ if and only if $1 \le j \le n-i$,
and $m_{i,n-j+1} = m_{i,j}$ for all $i,j$.
\end{lem}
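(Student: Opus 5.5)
The plan is to track a single card $i$ through the shuffle and reduce its final position to a binomial count. Before the formal argument I would fix the order in which cards are placed. The formula has denominator $2^i$, and $m_{1,j}$ puts mass $1/2$ on each of $j=1$ and $j=n$. Both facts force the reading that card $n$ is placed first and card $1$ last, so the placement order is $n,n-1,\dots,1$; this matches "drawn from the bottom" of a deck with card $1$ on top. I will make that convention explicit at the start.

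\textbf{Step 1 (block structure).} Every card is put at an end of the current pile. Hence after cards $n,n-1,\dots,i$ have been placed, they form a contiguous block of $n-i+1$ cards, and card $i$ sits at one end of it. It is at the top end with probability $1/2$ and at the bottom end with probability $1/2$, independently of the later cards. The later cards $1,\dots,i-1$ only add cards above or below this block and never disturb its internal order.

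\textbf{Step 2 (counting).} Let $K$ be the number of cards among $1,\dots,i-1$ placed on top. Then $K\sim\Bin(i-1,1/2)$, independent of the top/bottom choice for card $i$.
\begin{itemize}
\item If card $i$ is at the top of its block, its final position is $j=K+1$.
\item If it is at the bottom, its final position is $j=K+(n-i+1)$.
\end{itemize}
By the law of total probability,
\[
m_{i,j}=\tfrac12\binom{i-1}{j-1}2^{-(i-1)}+\tfrac12\binom{i-1}{j-n+i-1}2^{-(i-1)} .
\]
Since $\binom{i-1}{j-n+i-1}=\binom{i-1}{n-j}$, this is the claimed formula. The binomial conventions in the statement handle the boundary values automatically.

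\textbf{Step 3 (the case $i=n$).} Here the block is a single card, so "top" and "bottom" give the same configuration and both cases yield $j=K+1$. The formula still holds, because $\binom{n-1}{j-1}=\binom{n-1}{n-j}$ and the two halves add to $\binom{n-1}{j-1}/2^{n-1}$. This is the one spot where the two-case decomposition needs a sentence of justification.

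\textbf{Step 4 (the "in particular" claims).} The symmetry $m_{i,n-j+1}=m_{i,j}$ is immediate: $j\mapsto n-j+1$ swaps $\binom{i-1}{j-1}$ and $\binom{i-1}{n-j}$.

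For the support, $m_{i,j}>0$ exactly when $j\le i$ or $j\ge n-i+1$. So the zero set is $\{j:\ i<j<n-i+1\}$. I would state the support in this form and reconcile it with the wording in the lemma, which as literally written conflicts with $m_{1,1}=1/2$. Presumably the intended statement is the "middle" zero set, possibly under a reversed indexing convention; I would check this against~\cite{Clay2025a}.

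The main obstacle is not computational but conventional. One must pin down the placement order and the top/bottom orientation so that the formula, and especially the support statement, come out consistent. Once that is fixed, Steps 1–2 are a direct binomial count.
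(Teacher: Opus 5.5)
Your proposal is correct and follows the paper's argument. Like the paper, you condition on whether card $i$ goes to the top or bottom of the pile and count binomially how many of the later $i-1$ cards land on the corresponding side; your explicit treatment of $i=n$ fills a point the paper passes over. You are also right about the support clause: the zero set is $\{j: i<j\le n-i\}$, and no reindexing rescues the literal ``$1\le j\le n-i$'', because for $i<n$ that set is not invariant under $j\mapsto n+1-j$ while the stated symmetry forces the zero set to be, so it is simply a typo for $i<j\le n-i$.
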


For the reader's convenience and to make this work more self-contained we include the very short argument~\cite{Clay2025a}. 
\begin{proof}
Cards are always drawn from the bottom of the ordered deck. Thus, immediately before selecting the card labelled $i$ we have exactly $n-i$ cards in the pile. The $i$th card is placed at the bottom 
or the top of the pile with probability 1/2. If the card is placed at the top, we select $j-1$ cards from the remaining $i-1$ cards
and place them at the top. If the card is placed at the bottom, we may choose $n-j$ cards from the remaining $i - 1$ cards and place them at the bottom. 
As these events are disjoint, the result readily follows.
\end{proof}
The distribution of the first card $\fc$ is simply given by $m_{i,1}$, $1\le i\le n$:
\begin{equation}
\label{eq:fc}
\P\{\fc=i\}=m_{i,1}=\frac{1+\delta_{i,n}}{2^i},
\end{equation}
where $\delta_{k,\ell}$ denotes the Kronecker-delta. Thus, the optimal strategy follows directly from Lemma~\ref{lem:posi}.

\begin{prop}[Optimal strategy for full feedback]
\label{prop:opti}
Suppose a deck of $n$ cards is given a shelf shuffle with a single shelf,
and the player is given complete feedback. The player should guess card labelled one as the first guess and then always the card immediately following the previously shown card in all other positions.
Once one of card $n-1$ or $n$ is first shown, the player should guess the remaining cards in descending order, and is guaranteed to get these correct.
\end{prop}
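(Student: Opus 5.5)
The plan is to first read off the exact structure of the shuffled deck, then compute the conditional law of the next card given the cards already shown, and finally argue that maximizing each step separately is optimal.

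\textbf{Structure of the shuffled deck.} Cards are drawn from the bottom, so card $n$ is placed first and card $1$ last. Let $S\subseteq\{1,\dots,n-1\}$ be the set of cards placed on top. Each card $i\le n-1$ goes on top independently with probability $1/2$, so $S$ is uniformly distributed over all $2^{n-1}$ subsets. A later-placed (smaller) card placed on top lies above all earlier ones, and a later-placed card placed on the bottom lies below all earlier ones. Hence the resulting deck reads, from top to bottom, as follows: the elements of $S$ in increasing order, then card $n$, then the elements of $\{1,\dots,n-1\}\setminus S$ in decreasing order. In particular the deck is unimodal with peak $n$. This is consistent with Lemma~\ref{lem:posi} and with \eqref{eq:fc}.

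\textbf{Conditional law of the next card.} Suppose the cards shown so far are $a_1<\dots<a_k$, none equal to $n$. Before $n$ appears, the shown cards are exactly the smallest elements of $S$, which determines the following.
\begin{itemize}
\item $S\cap\{1,\dots,a_k\}=\{a_1,\dots,a_k\}$.
\item Every unseen label below $a_k$ lies in the complement of $S$.
\item The labels $a_k+1,\dots,n-1$ are still independently in $S$ with probability $1/2$ each, since the posterior is uniform over the consistent subsets.
\end{itemize}
The next card is the smallest element of $S$ exceeding $a_k$, or $n$ if there is none. Therefore
\[
\P\{\text{next card}=a_k+j\}=2^{-j} \quad\text{for } 1\le j<n-a_k,
\qquad
\P\{\text{next card}=n\}=2^{-(n-a_k-1)}.
\]
For $k=0$ the same formulas hold with $a_0=0$. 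Thus $a_k+1$ is a most likely card, with probability $1/2$. When $a_k+1=n-1$, the cards $n-1$ and $n$ tie at $1/2$ each, so guessing $n-1=a_k+1$ is still optimal.

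Once $n$ is shown, the rest of the deck is the complement of the shown set in decreasing order, which is known. If instead $n-1$ is shown first, then $n$ must come next, followed by the remaining cards in decreasing order. In either case, once $n-1$ or $n$ appears, guessing the remaining cards in descending order is correct with probability one. (Here ``descending order'' includes $n$ immediately after $n-1$.)

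\textbf{Optimality of greedy guessing.} Under complete feedback, the information available before each draw is the sequence of shown cards, and this does not depend on the guesses. Hence the expected number of correct guesses equals the sum over positions of $\E\bigl(\P\{\text{guess}=\text{card}\mid\text{history}\}\bigr)$. Each summand is maximized separately by guessing a mode of the conditional law. The choices above are such modes, so the strategy is optimal.

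I expect this decoupling step to be the one needing care. It is, however, the standard full-feedback argument: the filtration generated by the shown cards is independent of the strategy, and the count of correct guesses is a sum of indicators whose conditional expectations are bounded by the conditional maxima.
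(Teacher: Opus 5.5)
Your argument is correct, and it is more explicit and self-contained than the paper's. The paper justifies Proposition~\ref{prop:opti} only by reading the first-card law \eqref{eq:fc} off the first column of the position matrix in Lemma~\ref{lem:posi}, which shows that card $1$ is a mode for the first position. The extension to later guesses rests on a renewal property: once card $i$ is shown, cards $1,\dots,i-1$ are located and the unseen part behaves like a fresh shuffle of $n-i$ cards. The paper only asserts this afterwards, in the proof of Theorem~\ref{the:distDecomp}, and leaves the formal treatment to Clay's filtration-based argument.

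You bypass the position matrix entirely. Encoding the shuffle by the uniform random set $S$ of top-placed cards gives the deck explicitly: the elements of $S$ increasing, then $n$, then the complement decreasing. The posterior of $S$ given any history then gives the conditional law of the next card, which is exactly the shifted first-card law. So you actually prove the renewal property the paper takes for granted. Your decoupling step also supplies the global-optimality argument the paper never states: the filtration of shown cards does not depend on the guesses, so guessing a conditional mode maximizes each summand.

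The paper's route buys brevity and a direct link to the position matrix, which it reuses for the asymmetric shuffle via Proposition~\ref{prop:posi}. Your encoding transfers just as easily. With bias $p$, the set $S$ becomes a product-Bernoulli$(p)$ set, and the next card after $a_k$ is $a_k+1$ with probability $p$ versus $n$ with probability $(1-p)^{n-a_k-1}$. That is precisely the comparison that determines the asymmetric strategy.
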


The following example highlights the optimal strategy.
\begin{example}
\label{ex:1}
For example, if $n = 20$, say, and the first cards that come out are, in order, 1, 2, 5, 10, 11, 19, the player will have guessed cards 1, 2, 3, 6, 11, 12
under the optimal strategy and have gotten cards 1, 2, 11 correct. After this, the player will guess cards 20, 18, 17, 16, 15, 14, 13, 12, 9, 8, 7, 6, 4, 3 and get them all correct.
\end{example}

\section{Distributional analysis}
In~\cite{Clay2025a} an elaborate probabilistic formulation of the guessing strategy with complete feedback
is given using maps of sequences and in terms of the
filtration generated by the cards that have already appeared. Although natural, 
as the information available to the player is increasing with respect to time, is seems more difficult to perform a distributional analysis using this point of view, apart from the expected value.
We proceed in a different way, setting up a stochastic recurrence relation for the number of correct guesses $X_n$. This recurrence relation is then solved using generating functions. 
The benefit of this point of approach is that it is robust enough to be generalized to an asymmetric shelf shuffle. 

\subsection{Recursive description}
The key to our analysis is the following recurrence relation.
\begin{theorem}
\label{the:distDecomp}
The random variable $X=X_n$ of correctly guessed cards, starting with a deck of $n$ cards, after a one-time shelf shuffle satisfies for $n\ge 2$ the following distributional equation:
\begin{equation}
\label{eqn:Xn_DistEqn}
X_n \law I_1 \big(X^{\ast}_{n-1}+1\big)+(1-I_1)(X^{\ast}_{n-J_n}+J_n-1), 
\end{equation}
with initial values $X_1=1$ and $X_0=0$ and 
\[
\P\{J_n=i\}=\frac{1+\delta_{i-n}}{2^{i-1}},\quad 2\le i\le n.
\]
Here, $X^{\ast}$ denotes an independent copy of $X$ and the random variables appearing in the decomposition are mutually independent, with $I_1\law \Be(1/2)$ denoting a Bernoulli distribution.
\end{theorem}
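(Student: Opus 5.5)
The plan is to first recast the shelf shuffle as a unimodal arrangement, and then condition on the first card shown. Cards are drawn from the bottom of the deck in the order $n, n-1, \dots, 1$, and each is put on the top or the bottom of the pile. So the final deck reads as follows:
\begin{itemize}
\item first the cards placed on top, in increasing order;
\item then card $n$, whose choice is irrelevant since it lands on an empty pile;
\item then the cards placed at the bottom, in decreasing order.
\end{itemize}
Equivalently, each card $i<n$ is independently assigned to the ``left'' (top) side or the ``right'' (bottom) side with probability $1/2$. The deck is the increasing list of left cards, then $n$, then the decreasing list of right cards. This viewpoint is consistent with Lemma~\ref{lem:posi} and with \eqref{eq:fc}, and it makes the independence structure transparent. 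I would state it first and check it against the lemma.

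By Proposition~\ref{prop:opti} the first guess is card $1$. Let $I_1$ be the indicator that card $1$ is on the left; then $I_1\law\Be(1/2)$.

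\emph{Case $I_1=1$.} Card $1$ is on top and the guess is correct. The remaining deck consists of cards $2,\dots,n$, whose sides are independent fair coins. This is again a single-shelf shuffle of $n-1$ cards after relabelling $k\mapsto k-1$. The next guess under the optimal strategy is card $2$, which is the first guess in the relabelled game. The rule ``once $n-1$ or $n$ appears, guess descending'' also corresponds exactly to the rule for the subgame. Hence the remaining correct guesses are distributed as an independent copy $X^\ast_{n-1}$, which gives $X^\ast_{n-1}+1$.

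\emph{Case $I_1=0$.} Card $1$ is on the right, so the first card shown is $J_n$, the smallest left card among $2,\dots,n-1$, or $n$ if there is none. For $2\le i<n$ we need cards $1,\dots,i-1$ on the right and card $i$ on the left. Conditionally on $I_1=0$ this has probability $2^{-(i-1)}$. For $i=n$, all of cards $2,\dots,n-1$ must be on the right, which has probability $2^{-(n-2)}=2/2^{n-1}$. This matches the stated law of $J_n$, and $J_n$ is independent of $I_1$ because it depends only on the coins of cards $2,\dots,n-1$.

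The first guess is then wrong. The player now knows that cards $1,\dots,J_n-1$ all lie on the right. They therefore form the final $J_n-1$ positions of the deck, in the order $J_n-1,\dots,1$. Meanwhile the cards $J_n+1,\dots,n$ have independent unexamined coins and form a shelf shuffle of $n-J_n$ cards after relabelling $k\mapsto k-J_n$. The optimal strategy next guesses $J_n+1$, which is the first guess of the subgame. It then follows the subgame strategy until $n-1$ or $n$ appears, and guesses in descending order from then on. That descending phase covers the rest of the subdeck and afterwards the known block $J_n-1,\dots,1$, so the block contributes exactly $J_n-1$ certain correct guesses. This yields $X^\ast_{n-J_n}+J_n-1$, with $X^\ast$ independent of $J_n$. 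The edge case $J_n=n$ gives $X_0+n-1$, consistent with $X_0=0$.

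The main obstacle is the bookkeeping that the optimal strategy of Proposition~\ref{prop:opti}, restricted to the remaining cards, coincides with the optimal strategy for the relabelled subgame. In particular, the descending phase must be shown to merge correctly with the certified block $J_n-1,\dots,1$ that sits at the very end of the deck, behind the right cards of the subgame; the same check is needed when $n$ itself appears with $n-1$ on either side. Once this is verified, together with the independence of the subdeck coins from $(I_1,J_n)$, the distributional equation \eqref{eqn:Xn_DistEqn} follows. The initial values $X_0=0$ and $X_1=1$ are immediate.
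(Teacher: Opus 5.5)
Your proposal is correct and follows essentially the same route as the paper. You condition on whether card $1$ comes out first (probability $1/2$), otherwise on the first card $J_n$ shown, note that cards $1,\dots,J_n-1$ are then certified correct, and use the independence of the underlying Bernoulli placements to identify the cards $J_n+1,\dots,n$ as an independent shelf shuffle of size $n-J_n$. Your left/right coin representation, the conditional computation of the law of $J_n$, and the check that the strategy's descending phase runs through the subdeck and then the block $J_n-1,\dots,1$ make rigorous what the paper's short proof only asserts.
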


\begin{proof}
We guess the card labelled one with probability $1/2$ correctly. Otherwise, we draw any card $i$ of the remaining $n-1$ cards with probability~\eqref{eq:fc}.
As pointed out in the optimal strategy in Proposition~\ref{prop:opti} once we have observed card $i$, we know where the cards $1,2,\dots,i-1$are and these guesses are certified correct. Thus, we the only source of randomness are the correct guesses in the remaining $n-i$ cards.
As the shuffle is generated by a sequence of $n-1$ independent Bernoulli trials, this leads to an independent copy of $X^{\ast}$ of $X$ with a smaller number $n-i$ of cards. 
\end{proof}

\subsection{Distributional analysis}
Let $s_n(v)=\E(v^{X_n})$ denote the probability generating function of $X_n$. From Theorem~\ref{the:distDecomp} we obtain 
for $n\ge 2$ by taking expectations the recurrence relation
\begin{equation}
\label{eq:rec1}
s_n(v)=\frac12 v s_{n-1}(v)+\frac12\sum_{k=2}^{n-1}\frac1{2^{k-1}}v^{k-1}s_{n-k}(v) + \frac12\cdot \frac1{2^{n-1}}\cdot 2s_0(v)v^{n-1},
\end{equation}
with initial values $s_0(v)=1$ and $s_1(v)=v$. 
Let $S_(z,v)$ denote the generating function of the $s_n(v)$, defined 
\[
S(z,v)=\sum_{n\ge 1}\E(v^{X_n})z^n.
\]
\begin{lem}
\label{lem:gf}
The bivariate generating function $S(z,v)$ of the number of correct guesses after a single shelf shuffle with complete feedback is given by 
\[
S(z,v)=\frac{zv+\frac{\frac{z^2v}2}{1-\frac{zv}2}}{1-\frac{zv}2-\frac{\frac{z}2}{1-\frac{zv}2}+
\frac{z}2}=\frac{2zv\big(2+(1-v)z\big)}{4-4vz+(v^2-v)z}.
\]
\end{lem}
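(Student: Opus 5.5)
The plan is to turn the recurrence \eqref{eq:rec1} into a functional equation for $S(z,v)$ by multiplying by $z^n$ and summing over $n\ge 2$, then to solve that linear equation. First I rewrite \eqref{eq:rec1} in a uniform shape. The weight of the $k$-th summand is $\frac12\cdot\frac{1}{2^{k-1}}v^{k-1}=\frac{v^{k-1}}{2^k}$. The boundary term $\frac{1}{2^{n-1}}v^{n-1}s_0(v)$ is exactly \emph{twice} the value the summand would have at $k=n$. So I split it as the $k=n$ term of the sum plus one extra copy:
\[
s_n(v)=\frac{v}{2}s_{n-1}(v)+\sum_{k=2}^{n}\frac{v^{k-1}}{2^{k}}s_{n-k}(v)+\frac{v^{n-1}}{2^{n}},\qquad n\ge 2 .
\]
With $s_0=1$ and $s_1=v$, the extra copy at $k=n$ is a pure boundary correction that does not involve $S$.

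Next I multiply by $z^n$ and sum over $n\ge 2$, treating each piece separately.
\begin{itemize}
\item The left side gives $S(z,v)-zv$.
\item The first term gives $\frac{zv}{2}S(z,v)$.
\item The convolution sum splits as $\bigl(\sum_{k\ge2}\frac{v^{k-1}z^k}{2^k}\bigr)\bigl(1+S(z,v)\bigr)$, where the $1$ accounts for $s_0$, since $S$ starts at $n=1$. The prefactor is the geometric series $\frac{z^2v/4}{1-zv/2}$.
\item The extra boundary copy gives $\frac1v\sum_{n\ge2}(zv/2)^n=\frac{z^2v/4}{1-zv/2}$.
\end{itemize}
Collecting terms yields a linear equation $S\cdot D(z,v)=N(z,v)$, with $N$ and $D$ rational in $z$ and $v$. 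Solving gives the first displayed expression of Lemma~\ref{lem:gf}, after matching how the geometric factors $\frac{z/2}{1-zv/2}$ arise. The sum can equivalently be re-indexed starting from $k=1$ by absorbing the $\frac v2 s_{n-1}$ term; I would choose whichever grouping reproduces the displayed nested fraction verbatim.

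Finally, I clear the inner denominators by multiplying numerator and denominator by $4(1-zv/2)=2(2-zv)$ to get the polynomial form. The step I expect to be delicate is purely bookkeeping: handling the doubled boundary term and the offset caused by $S$ starting at $n=1$ while $s_0=1$. An error there changes the $z^2$ coefficients in the denominator. To guard against this, I would check the final rational function against the first few values computed directly from \eqref{eq:rec1}, for instance $s_1=v$ and $s_2=\frac{v^2+v}{2}$, and also check that $S(z,1)=\frac{z}{1-z}$. If the displayed closed form does not pass these checks, the checks identify which coefficient needs correcting, so I would trust the derivation over the typeset form.
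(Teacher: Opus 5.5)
Your derivation is correct and follows the paper's argument; the only difference is bookkeeping. The paper keeps the doubled boundary term whole and instead extends the convolution to $k=1$ by adding and subtracting $\frac z2 S$ (the $k=1$ summand is $\frac12 s_{n-1}$, not $\frac v2 s_{n-1}$, so nothing is ``absorbed''), which produces the nested denominator directly, whereas your split gives the equivalent form $1-\frac{zv}{2}-\frac{z^2v/4}{1-zv/2}$. Carrying out your final step gives the denominator $4-4vz+(v^2-v)z^2$, so the typeset $(v^2-v)z$ is a typo, consistent with the two roots $\rho_{1,2}(v)$ used later; your check $S(z,1)=\frac{z}{1-z}$ cannot detect it because that term vanishes at $v=1$, but your check $s_2=\frac{v^2+v}{2}$ does.
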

\begin{proof}
From the recurrence relation~\eqref{eq:rec1} we get first
\begin{equation}
\begin{split}
s_n(v)z^n&=\frac{zv}2\cdot s_{n-1}(v)z^{n-1}+\frac{z}2\sum_{k=1}^{n-1}\frac{v^{k-1}z^{k-1}}{2^{k-1}}\big(s_{n-k}(v)z^{n-k}\big)
-\frac{z}2s_{n-1}(v)z^{n-1}\\
&\qquad  + z\frac{(zv)^{n-1}}{2^{n-1}}.
\end{split}
\end{equation}
Summing up over $n\ge 2$ translates the recurrence relation to the equation
\[
S(z,v)-zv=\frac{zv}2\cdot S(z,v)+\frac{z}2 \frac{S(z,v)}{1-\frac{zv}2}-\frac{z}2 S(z,v)+\frac{z^2v/2}{1-zv/2},
\]
and furthermore to
\[
\Big(1-\frac{zv}2-\frac{\frac{z}2}{1-\frac{zv}2}+\frac{z}2\Big)\cdot S(z,v)=zv+\frac{\frac{z^2v}2}{1-\frac{zv}2}.
\]
This leads to an explicit expression, which is then readily simplified (using a computer algebra system) to the stated form.
\end{proof}

\subsection{Expectation, variance and limit law}
The expected value and the variance of $X_n$ can be obtained from the generating function $S(z,v)$:
\[
\E(X_n)=[z^n]\partial_v S(z,1),\quad \Var(X_n)=[z^n]\big(\partial_v^2 S(z,1)\big) +\E(X_n)-\E(X_n)^2.
\]
Here, $[z^n]$ denotes as usual the extraction of coefficients operator~\cite{FlaSed}.
We also used a short-hand notation
\[
\partial_v S(z,1)= \big(\partial_v S(z,v)\big)|_{v=1}.
\]
After taking the derivative with respect to $v$ and evaluation at $v=1$ we obtain from Lemma~\ref{lem:gf}
\[
\partial_v S(z,1)=\frac{z(z^2-2z+4)}{4(1-z)^2}=z+\sum_{n=2}^{\infty}\frac34\cdot n\cdot z^n.
\]
Similarly, we obtain
\[
\partial_v^2 S(z,1)=\frac{z^2(z^3-2z^2-8)}{8(z-1)^3},
\]
and thus the second factorial moment $\E(\fallfak{X_n}2)$. 
After partial fraction decomposition and the standard identity 
\[
\frac{1}{(1-z)^{m+1}}=\sum_{n\ge 0}\binom{n+m}{m}z^n,
\]
as well as the classical formula
\begin{equation}
\label{eqn:Var}
\Var(X_n)=\E(\fallfak{X_n}2)+\E(X_n)-\E(X_n)^2.
\end{equation}
we arrive at the following result. 
\begin{theorem}
The random variable $X=X_n$ of correctly guessed cards, starting with a deck of $n$ cards, after a one-time shelf shuffle satisfies
\[
\E(X_n)=\frac34\cdot n,\quad n\ge 2, \quad \text{with }\E(X_1)=1.
\]
Furthermore,
\[
\Var(X_n)=\frac{n}{16},\quad n\ge 3,\quad  \text{with }\Var(X_1)=0,\quad \Var(X_2)=\frac14.
\]
\end{theorem}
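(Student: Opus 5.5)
The plan is to read off both moments directly from the two closed forms for $\partial_v S(z,1)$ and $\partial_v^2 S(z,1)$ given just before the statement (which come from Lemma~\ref{lem:gf}). First I would expand each of them exactly, by partial fractions in powers of $(1-z)^{-1}$. Then I would extract coefficients with $\frac{1}{(1-z)^{m+1}}=\sum_{n\ge0}\binom{n+m}{m}z^n$. Finally I would combine the results via \eqref{eqn:Var}. The only delicate point is that each expansion has a polynomial part, which is what creates the exceptional small values of $n$. I would therefore keep the polynomial terms explicitly rather than discarding them.

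For the expectation I would write $z^2-2z+4=(1-z)^2+3$, which gives
\[
\partial_v S(z,1)=\frac z4+\frac{3z}{4(1-z)^2}.
\]
Since $[z^n]\frac{z}{(1-z)^2}=n$, this yields $\E(X_n)=\frac34 n$ for $n\ge2$. The extra term $\frac z4$ only affects $n=1$, where it gives $\E(X_1)=\frac14+\frac34=1$.

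For the second factorial moment I would substitute $z=1-w$ in the cubic factor:
\[
z^3-2z^2-8=-9+w+w^2-w^3 .
\]
Hence
\[
\partial_v^2 S(z,1)=z^2\Big(\frac{9}{8(1-z)^3}-\frac{1}{8(1-z)^2}-\frac{1}{8(1-z)}+\frac18\Big).
\]
Extracting $[z^n]$ term by term gives
\[
\E(\fallfak{X_n}2)=\frac98\binom{n}{2}-\frac{n-1}8-\frac18+\frac18\,\delta_{n,2}.
\]
For $n\ge3$ this equals $\frac{9n^2}{16}-\frac{11n}{16}$. Plugging this into \eqref{eqn:Var} together with $\E(X_n)=\frac34n$ gives
\[
\Var(X_n)=\frac{9n^2}{16}-\frac{11n}{16}+\frac{12n}{16}-\frac{9n^2}{16}=\frac{n}{16}.
\]
For $n=2$ the extra $\frac18$ gives $\E(\fallfak{X_2}2)=1$, so $\Var(X_2)=1+\frac32-\frac94=\frac14$. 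For $n=1$ we have $X_1=1$ deterministically, so $\Var(X_1)=0$; this also follows because $[z^1]$ of the second-derivative series vanishes.

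The main obstacle I expect is not the extraction itself but confirming that the displayed derivatives really come from the generating function. The simplified closed form in Lemma~\ref{lem:gf} looks suspicious, since its denominator $4-4vz+(v^2-v)z$ seems to be missing a $z^2$. I would therefore recompute $\partial_v S$ and $\partial_v^2 S$ from the unsimplified continued-fraction form in Lemma~\ref{lem:gf}. As an independent sanity check, I would compute $s_2(v)=\frac{v+v^2}{2}$ and $s_3(v)$ directly from \eqref{eq:rec1} and compare their first two derivatives at $v=1$ with the formulas above.
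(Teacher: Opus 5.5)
Your proposal is correct and follows the paper's route: partial fractions of $\partial_v S(z,1)$ and $\partial_v^2 S(z,1)$, coefficient extraction with the binomial series, then the factorial-moment identity \eqref{eqn:Var}; keeping the polynomial parts $\frac z4$ and $\frac{z^2}{8}$ correctly produces the exceptional values at $n=1,2$. Your suspicion about Lemma~\ref{lem:gf} is justified. The simplified denominator should read $4-4vz+(v^2-v)z^2$, which is consistent with the later roots $\rho_{1,2}(v)$, and with this correction the displayed derivatives, and hence all your computations, check out.
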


Next we turn to the limit law for $X_n$. For the derivation of the limit law we use Hwang's quasi-power theorem~\cite{Hwang1994,Hwang1998}.
First, we state our result. 
\begin{theorem}
\label{the:main}
The random variable $X=X_n$ of correctly guessed cards, starting with a deck of $n$ cards, after a one-time shelf shuffle satisfies
a central limit theorem
\[
\frac{X_n-\frac34 n}{\sqrt{n}/4}\to \mathcal{N}(0,1). 
\]
\end{theorem}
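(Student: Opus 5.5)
The plan is to apply Hwang's quasi-power theorem directly to the explicit generating function of Lemma~\ref{lem:gf}. The first step is to fix the denominator. Clearing the inner fraction in the first expression of Lemma~\ref{lem:gf}, i.e.\ multiplying numerator and denominator by $4(1-\frac{zv}2)$, shows that the denominator is in fact quadratic in $z$:
\[
Q(z,v)=4-4vz+(v^2-v)z^2 .
\]
The displayed simplified form in the lemma drops the square on the last $z$. So I will work with $S(z,v)=P(z,v)/Q(z,v)$, where $P(z,v)=2zv(2+(1-v)z)$ is a polynomial.

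For $v$ in a small complex neighbourhood of $1$, the plan is as follows.
\begin{itemize}
\item At $v=1$ we have $Q(z,1)=4-4z$. This has the single simple root $z=1$, and the second root has escaped to infinity because the leading coefficient $v^2-v$ vanishes.
\item For $v\neq 1$ near $1$ the two roots are
\[
z_\pm(v)=\frac{2v\pm 2\sqrt{v}}{v^2-v}=\frac{2}{\sqrt{v}\,(\sqrt v\mp 1)}.
\]
The small root is $\rho(v)=\frac{2}{\sqrt v(\sqrt v+1)}$, which is analytic near $v=1$ with $\rho(1)=1$. The other root has modulus of order $1/|v-1|$ and so tends to infinity.
\item Hence, for $|v-1|<\varepsilon$, $S(z,v)$ is meromorphic in $|z|<R$ for some fixed $R>1$, with the single simple pole $\rho(v)$ and $P(\rho(v),v)\neq 0$.
\item Singularity analysis, or simply extracting coefficients of the residue and bounding the remainder by Cauchy's formula on $|z|=R'$ with $1<R'<R$, then gives uniformly
\[
s_n(v)=C(v)\,\rho(v)^{-n}\bigl(1+O(\kappa^{-n})\bigr),\qquad \kappa>1,
\]
with $C(v)$ analytic and $C(1)=1$. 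This is exactly the quasi-power form $\E(v^{X_n})=C(v)B(v)^n(1+O(\kappa^{-n}))$ with
\[
B(v)=1/\rho(v)=\frac{\sqrt v(\sqrt v+1)}{2}=\frac{v+\sqrt v}{2}.
\]
\end{itemize}

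It then remains to check the mean and variance constants and the non-degeneracy condition. Writing $b(s)=\log B(e^s)=\log\frac{e^s+e^{s/2}}{2}$, I will compute $b'(0)=\frac{1+1/2}{2}=\frac34$. For the second derivative I will use $b''(0)=\frac{B''(1)+B'(1)}{B(1)}-\bigl(\frac{B'(1)}{B(1)}\bigr)^2$. Here $B''(1)=-\frac18$, so $b''(0)=-\frac18+\frac34-\frac9{16}=\frac1{16}$. This matches the mean $\frac34 n$ and the variance $n/16$ from the preceding theorem and is strictly positive, so the quasi-power theorem yields $(X_n-\frac34 n)/(\sqrt n/4)\claw\mathcal N(0,1)$, with a Berry--Esseen rate $O(n^{-1/2})$ as a bonus.

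The main obstacle is the uniformity of the pole analysis near $v=1$. There the quadratic degenerates, with leading coefficient $v^2-v\to0$, so one has to argue carefully that the second root leaves every fixed disk, so the gap ratio $\kappa$ is uniform, and that no cancellation of $\rho(v)$ with the numerator occurs. I would handle this by working with $\sqrt v$ on the principal branch near $1$ and with the explicit factorisation $Q(z,v)=(v^2-v)\,(z-z_+(v))(z-z_-(v))$. Note that as $v\to1$ only the factor $z-z_-(v)$ blows up while the prefactor $v^2-v$ vanishes, and their product stays bounded away from $0$ on compact sets of $z$.
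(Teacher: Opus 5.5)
Your proposal is correct and follows essentially the same route as the paper. Both locate the simple dominant root $\rho(v)=2/(\sqrt v(\sqrt v+1))$ of the quadratic denominator $4-4vz+(v^2-v)z^2$; this is the same function as the paper's $\rho_1(v)=2(v-\sqrt v)/(v(v-1))$. Both then derive $\E(v^{X_n})\sim C(v)\rho(v)^{-n}$ and apply Hwang's quasi-power theorem with $B(v)=(v+\sqrt v)/2$.

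You are also right that the missing square in the lemma's displayed denominator is a typo; the paper's own root formulas only make sense for the $z^2$ version. Your uniform Cauchy-bound argument near $v=1$ and your explicit check that $\fv(B)=1/16\neq 0$ supply details the paper merely asserts.
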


For the proof of this result we turn to the quasi-power theorem in the formulation of~\cite{FlaSed}. 
Given a function $f(v)$ analytic at $v = 1$ and assumed to satisfy $f(1) \neq 0$, we set
\begin{equation}
\fm(f) = \frac{f'(1)}{f(1)}, 
\qquad
\fv(f) = \frac{f''(1)}{f(1)} + \frac{f'(1)}{f(1)} - \left( \frac{f'(1)}{f(1)} \right)^2 .
\end{equation}
The quantities $\fm(f)$ and $\fv(f)$ are called the \emph{analytic mean} and \emph{analytic variance} of function $f$~\cite{FlaSed}.
We collect the following result.
\begin{theorem}[Hwang's quasi-powers Theorem]
Let the $X_n$ be non-negative discrete random variables, supported by $\mathbb{Z}_{\ge 0}$, with probability generating functions $p_n(u)$. Assume that,
uniformly in a fixed complex neighbourhood of $v = 1$, for sequences
$\beta_n, \kappa_n \to +\infty$, one has
\begin{equation*}
\E(v^{X_n})=  A(v)\, B(v)^{\beta_n} \left( 1 + O\!\left( \frac{1}{\kappa_n} \right) \right),
\end{equation*}
where $A(u)$, $B(u)$ are analytic at $u = 1$ and $A(1) = B(1) = 1$.
Assume finally that $B(u)$ satisfies the so-called variability condition,
\[
\fv(B(v))=B''(1) + B'(1) - B'(1)^2 \neq 0.
\]
Under these conditions, the mean and variance of $X_n$ satisfy
\begin{equation*}
\mu_n = \mathbb{E}(X_n)
= \beta_n \fm(B(v)) + \fm(A(v)) + O\!\left( \kappa_n^{-1} \right),
\end{equation*}
\begin{equation*}
\sigma_n^2 = \mathbb{V}(X_n)
= \beta_n \fv(B(v)) + \fv(A(v)) + O\!\left( \kappa_n^{-1} \right).
\end{equation*}
The distribution of $X_n$ is, after standardization, asymptotically Gaussian, and the
speed of convergence to the Gaussian limit is $\mathcal{O}\left( \kappa_n^{-1} + \beta_n^{-1/2} \right)$:
\begin{equation*}
\mathbb{P}\left\{
\frac{X_n - \mathbb{E}(X_n)}{\sqrt{\mathbb{V}(X_n)}} \le x
\right\}
=\Phi(x)+\mathcal{O}\left(\frac{1}{\kappa_n}+\frac{1}{\sqrt{\beta_n}}\right),
\end{equation*}
where $\Phi(x)$ is the distribution function of a standard normal,
\[
\Phi(x)=\frac{1}{\sqrt{2\pi}}
\int_{-\infty}^{x}
e^{-v^2/2}\, dv.
\]
\end{theorem}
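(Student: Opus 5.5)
The plan is the classical one: derive the moment estimates from the analytic perturbation structure via Cauchy's estimates, then prove the Gaussian limit with a rate through the Berry--Esseen smoothing inequality applied to characteristic functions. Throughout, I work with the logarithm of the probability generating function on a small disc $|v-1|\le r$, where by assumption
\[
\log \E(v^{X_n}) = \beta_n \log B(v) + \log A(v) + \log\bigl(1+\varepsilon_n(v)\bigr),
\qquad \varepsilon_n(v)=O(1/\kappa_n)
\]
uniformly. Here $\varepsilon_n$ is analytic on the disc, since it is a quotient of analytic functions and $A,B$ are nonzero near $1$.

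\textbf{Mean and variance.} The mean and variance of $X_n$ are the first two derivatives of $\log \E(e^{sX_n})$ at $s=0$. Substituting $v=e^s$, I differentiate the three terms separately.
\begin{itemize}
\item The first two terms produce exactly $\beta_n\fm(B)+\fm(A)$ and $\beta_n\fv(B)+\fv(A)$, by the chain rule for $s\mapsto \log f(e^s)$.
\item The remainder $\log(1+\varepsilon_n(e^s))$ is analytic and $O(1/\kappa_n)$ on a fixed disc in $s$. Cauchy's estimates on a slightly smaller disc then show that its first two derivatives at $s=0$ are also $O(1/\kappa_n)$.
\end{itemize}
This gives the stated formulas for $\mu_n$ and $\sigma_n^2$. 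In particular, the variability condition yields $\sigma_n^2\sim \beta_n\fv(B)$, which is of exact order $\beta_n$.

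\textbf{Characteristic function of the standardized variable.} Put $Y_n=(X_n-\mu_n)/\sigma_n$ and $\varphi_n(t)=\E(e^{itY_n})$. For $|t|\le c\sqrt{\beta_n}$ with $c$ small, I take $s=it/\sigma_n$, so that $e^s$ lies in the disc.
\begin{itemize}
\item Taylor-expanding $\beta_n\log B(e^s)+\log A(e^s)$ to third order, the linear term cancels against the centering by $\mu_n$, up to an $O(|t|/(\sigma_n\kappa_n))$ correction.
\item The quadratic term gives $-t^2/2$, up to $O(t^2/(\beta_n\kappa_n))$.
\item The cubic remainder is $O(\beta_n|t|^3/\sigma_n^3)=O(|t|^3/\sqrt{\beta_n})$.
\end{itemize}
Hence
\[
\varphi_n(t)=e^{-t^2/2}\exp\Bigl(O\bigl(|t|^3\beta_n^{-1/2}\bigr)+O\bigl(\tfrac{|t|+t^2}{\sqrt{\beta_n}\,\kappa_n}\bigr)\Bigr)\bigl(1+\varepsilon_n(e^{it/\sigma_n})\bigr).
\]
Choosing $c$ small keeps the cubic term at most $t^2/4$ in absolute value. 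This leaves a Gaussian factor $e^{-t^2/4}$ that dominates all polynomial factors.

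\textbf{Berry--Esseen smoothing.} I apply the smoothing inequality
\[
\sup_x\bigl|\P\{Y_n\le x\}-\Phi(x)\bigr|\le \frac1\pi\int_{-T}^{T}\frac{|\varphi_n(t)-e^{-t^2/2}|}{|t|}\,dt+\frac{C}{T},
\qquad T=c\sqrt{\beta_n}.
\]
The term $C/T$ gives $O(\beta_n^{-1/2})$. The cubic contributions give $\int |t|^2 e^{-t^2/4}\,dt\cdot\beta_n^{-1/2}=O(\beta_n^{-1/2})$.

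\textbf{Main obstacle: the error near $t=0$.} The delicate point is the factor $1+\varepsilon_n$. A bound of size $O(1/\kappa_n)$ divided by $|t|$ is not integrable at $t=0$. The fix is to use the normalisation $\E(1^{X_n})=A(1)B(1)=1$, which forces $\varepsilon_n(1)=0$. Then $\varepsilon_n(v)/(v-1)$ is analytic on the disc, and by the maximum principle on a slightly smaller disc it is $O(1/\kappa_n)$ uniformly there. Therefore
\[
|\varepsilon_n(e^{it/\sigma_n})|=O\Bigl(\frac{|t|}{\sigma_n\kappa_n}\Bigr),
\]
and after division by $|t|$ this contributes $O\bigl(1/(\kappa_n\sqrt{\beta_n})\bigr)\int e^{-t^2/4}\,dt$, which is well within the claimed bound. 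The same vanishing at $v=1$ handles the linear centering correction.

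Collecting the terms gives the stated rate $O(\kappa_n^{-1}+\beta_n^{-1/2})$, and asymptotic normality follows.
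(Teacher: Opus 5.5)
Your proposal is correct. The paper gives no proof of this theorem to compare against: it quotes it from Flajolet--Sedgewick (originally Hwang) and only applies it in the proof of Theorem~\ref{the:main}.

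Your argument is essentially the standard proof behind that citation:
\begin{itemize}
\item cumulants from $\log \E(e^{sX_n})$, with Cauchy estimates controlling the analytic error factor;
\item a third-order expansion of the characteristic function for $|t|\le c\sqrt{\beta_n}$;
\item the Berry--Esseen smoothing inequality with $T\asymp\sqrt{\beta_n}$.
\end{itemize}
You also correctly handle the one delicate point. Since $\varepsilon_n(1)=0$, the maximum principle gives $\varepsilon_n(e^{it/\sigma_n})=O\bigl(|t|/(\sigma_n\kappa_n)\bigr)$, so the error factor stays integrable against $dt/|t|$ near $t=0$.

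In fact your estimates prove slightly more than stated. Every $\kappa_n$-dependent contribution carries both the Gaussian damping and an extra factor $\sigma_n^{-1}$:
\begin{itemize}
\item the linear centring correction and the $\varepsilon_n$-term each give $O(\kappa_n^{-1}\beta_n^{-1/2})$;
\item the quadratic correction gives $O(\kappa_n^{-1}\beta_n^{-1})$.
\end{itemize}
Hence your integral bound is $O(\beta_n^{-1/2})$ overall, which implies the stated rate $O(\kappa_n^{-1}+\beta_n^{-1/2})$.

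One small implicit step should be written out. When you say $\sigma_n^2$ is of exact order $\beta_n$, you are using that $\fm(B)$ and $\fv(B)$ are real and that $\fv(B)>0$. This follows from your moment estimates, since $\mu_n/\beta_n\to\fm(B)$ and $0\le\sigma_n^2/\beta_n\to\fv(B)\neq 0$. It is also what justifies $|e^{\beta_n\log B(e^{it/\sigma_n})}|$ being Gaussian-small on $|t|\le c\sqrt{\beta_n}$.
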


\begin{proof}[Proof of Theorem~\ref{the:main}]
We analyze the singular structure of $S(z,v)$. The denominator 
\[
4-4vz+(v^2-v)z
\]
has two roots $\rho_1(v)$ and $\rho_2(v)$
\[
\rho_1(v)=\frac{2(v-\sqrt{v})}{v(v-1)},
\quad \rho_2(v)=\frac{2(v+\sqrt{v})}{v(v-1)},
\]
with dominant singularity $\rho(v)=\rho_1(v)$, satisfying 
$\lim_{v\to 1} \rho(v)=1$. The quasi-power theorem applies because the dominant singularity $\rho(v)$ is simple and analytic near $v=1$,.
and $S(z,v)$ is analytic except at $z=\rho(v)$. We expand $S(z,v)$ around $z=\rho(v)$, for $v$ close to $1$, 
obtaining the singular expansion
\[
S(z,v)\sim \frac{A(v)}{1 - \frac{z}{\rho(v)}}, \quad A(v) \neq 0.
\]
By standard contour integration we get the desired expansion of $s_n(v)=\E(v^{X_n})$:
\[
\E(v^{X_n})=[z^n]S(z,v)\sim \frac{A(v)}{\rho(v)^n}.
\]
Application of the Quasi-Power theorem then leads to the stated result; one may also check that the asymptotics $\beta_n\fm(B(v))=3n/4$ and
$\beta_n\fv(B(v))=n/16$.
\end{proof}

Finally, we note that the powerful theorems of Hwang~\cite{Hwang1996LargeDeviationsI} also directly leads to a large deviation principle. 

\section{Extensions}
\subsection{Asymmetric shelf shuffle}
An obvious question is how robust the presented approach is. Consider the following biased asymmetric shelf shuffle
\begin{defi}[Asymmetric Single-shelf shuffle]
Given an ordered deck of $n$ cards labelled one up to $n$ and a parameter $p\in(0,1)$. Cards are drawn from the bottom of the deck and placed in a pile. 
When a card is drawn, it is placed at the top with probability $p$ and at the bottom of the pile with probability $1-p$.
\end{defi}
Our aim is to maximize again the number of correct guesses. We require the distribution of the first drawn card.
\begin{lem}
The distribution of the first card $\fc_n$ in a single asymmetric shelf shuffle with parameter $p\in(0,1)$ is given by as follows:
\begin{equation}
\label{eq:fc2}
\P\{\fc_n=i\}=p(1-p)^{i-1},\quad 1\le i \le n-1
\end{equation}
and 
\[
\P\{\fc_n=n\}=(1-p)^{n-1}.
\]
\end{lem}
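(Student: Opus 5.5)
The plan is to mimic the short argument in the proof of Lemma~\ref{lem:posi}, now specialised to position $j=1$ and with weights $p$ and $1-p$ in place of $1/2$. Cards are drawn from the bottom of the ordered deck, so card $n$ is placed first, then card $n-1$, and so on, until card $1$ is placed last. Each card is put on top independently with probability $p$ and on the bottom with probability $1-p$. The card in position~$1$ (the top of the final pile) is the \emph{last} card that was placed on top. If no card after the first was placed on top, it is the very first card placed, namely card~$n$.

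For $1\le i\le n-1$, card $i$ is the first card exactly when two independent events occur.
\begin{itemize}
\item Card $i$ is placed on top, which has probability $p$; we will note that $i\le n-1$ guarantees the pile is nonempty at that moment, so this choice genuinely matters.
\item Each of the $i-1$ cards placed after it, namely cards $i-1,\dots,1$, is placed at the bottom, which has probability $(1-p)^{i-1}$.
\end{itemize}
Multiplying gives $p(1-p)^{i-1}$.

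For $i=n$ the pile is empty when card $n$ is placed, so its top/bottom choice is irrelevant. Card $n$ ends in position~$1$ exactly when all subsequent cards $n-1,\dots,1$ go to the bottom, which has probability $(1-p)^{n-1}$.

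As a consistency check we verify that the probabilities sum to one:
\[
\sum_{i=1}^{n-1}p(1-p)^{i-1}+(1-p)^{n-1}=\bigl(1-(1-p)^{n-1}\bigr)+(1-p)^{n-1}=1.
\]
We will also note that setting $p=1/2$ recovers \eqref{eq:fc}. Alternatively, one can derive the full asymmetric position matrix as in Lemma~\ref{lem:posi} and read off the column $j=1$.

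The only delicate point is the boundary case $i=n$. There the top/bottom choice of card $n$ is irrelevant, and this is exactly why $\P\{\fc_n=n\}$ lacks the factor $p$ that appears in the other terms.
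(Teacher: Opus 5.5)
Your proof is correct and matches the paper's argument: card $i\le n-1$ must be placed on top (probability $p$) and the $i-1$ cards placed after it must all go to the bottom, while card $n$ ends in position $1$ exactly when all $n-1$ later cards go below it. Your sum-to-one check and the $p=1/2$ comparison are not needed for the proof, and the paper omits them.
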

\begin{proof}
For $i$, with $1\le i \le n-1$, landing on top, we must have a successful coin toss with probability $p$, and then $i-1$ unsuccessful tosses.
For $n$ ending on top we require all other cards to be below.
\end{proof}
Of course, we can also obtain the complete position matrix, which is double stochastic.
\begin{prop}[Position matrix of a single asymmetric shelf shuffle]
\label{prop:posi}
Suppose a deck of $n$ cards is shuffled once in an asymmetric shelf-shuffler with one shelf with parameter $p$.
Let $m_{i,j}$ be the probability that card $i$ lands in position $j$.
Then, we have
\[
m_{i,j}
=
p^{j}(1-p)^{i-j}\binom{i-1}{j-1} + p^{i-1-(n-j)}(1-p)^{n-j+1}\binom{i-1}{n-j},
\]
for $1 \le i \le n-1$ and $1 \le j \le n$. For $i=n$ we have 
\[
m_{n,j}= p^{j-1}(1-p)^{n-j}\binom{n-1}{j-1},\quad 1\le j\le n.
\]
\end{prop}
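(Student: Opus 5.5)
The plan is to follow the counting argument in the proof of Lemma~\ref{lem:posi} for the symmetric case, replacing the fair coin by a $p$-coin. Cards are drawn from the bottom of the ordered deck, so they arrive at the pile in the order $n, n-1, \dots, 1$. Immediately before card $i$ is placed, the pile contains exactly the $n-i$ cards $n,\dots,i+1$. After that, the $i-1$ cards $i-1,\dots,1$ are placed, each independently on top with probability $p$ and at the bottom with probability $1-p$. The key observation is that the final position of card $i$ depends only on two things: its own toss, and how many of the subsequent $i-1$ cards go on top.

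\textbf{Case $1\le i\le n-1$.} The pile is nonempty when card $i$ arrives, so its toss genuinely matters. I split according to that toss.
\begin{itemize}
\item If card $i$ goes on top (probability $p$), it ends in position $j$ exactly when $j-1$ of the later $i-1$ cards are placed on top and the remaining $i-j$ at the bottom. This has binomial probability $\binom{i-1}{j-1}p^{j-1}(1-p)^{i-j}$, and multiplying by $p$ gives the first summand.
\item If card $i$ goes to the bottom (probability $1-p$), it ends in position $j$ exactly when $n-j$ of the later cards are placed below it. The other $i-1-(n-j)$ later cards then go on top. This has probability $\binom{i-1}{n-j}(1-p)^{n-j}p^{i-1-(n-j)}$, and multiplying by $1-p$ gives the second summand.
\end{itemize}
The two events are disjoint, so the probabilities add. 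The usual conventions apply: $\binom{a}{b}=0$ for $b<0$ or $b>a$, and $\binom00=1$. Under these conventions, terms whose required counts are impossible vanish automatically, for instance when $j>i$ or when $n-j>i-1$.

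\textbf{Case $i=n$.} Card $n$ is placed first, onto an empty pile, so its own toss does not affect anything. Its final position is $j$ exactly when $j-1$ of the remaining $n-1$ cards go on top. This gives $m_{n,j}=p^{j-1}(1-p)^{n-j}\binom{n-1}{j-1}$.

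The only step needing care is this special role of card $n$. The generic formula would wrongly split card $n$'s contribution into a top part and a bottom part weighted by $p$ and $1-p$. This is why the statement treats $i=n$ separately, and it matches the first-card distribution in the preceding lemma, which handles $\fc_n=n$ separately for the same reason.

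As a sanity check I would verify two things. First, at $j=1$ the formulas reduce to~\eqref{eq:fc2}. Second, the matrix is doubly stochastic. Row sums equal $1$ by the binomial theorem. Column sums equal $1$ because, for any fixed realisation of the tosses, each position is occupied by exactly one card.
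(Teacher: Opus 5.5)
Your argument is correct and is essentially the paper's own proof. Like the paper, you condition on card $i$'s toss and then count how many of the later $i-1$ cards go on top (respectively to the bottom), with the two cases disjoint; you additionally treat $i=n$ explicitly, which the paper's proof leaves implicit.

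One correction to your side remark: the generic formula is not wrong at $i=n$. Since $\binom{n-1}{n-j}=\binom{n-1}{j-1}$, its two summands there are $p\,q$ and $(1-p)\,q$ with $q=\binom{n-1}{j-1}p^{j-1}(1-p)^{n-j}$, so they sum exactly to $m_{n,j}$. The separate case is therefore only a matter of presentation. This differs from the first-card lemma, where the one-term formula $p(1-p)^{i-1}$ omits the bottom-placement term, and that term is nonzero only for $i=n$.
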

Here, we used again the convention that $\binom{0}{0} = 1$ and $\binom{0}{k} = 0$
for all positive integers $k$. In particular, $m_{i,j} = 0$ if and only if $1 \le j \le n-i$,
and $m_{i,n-j+1} = m_{i,j}$ for all $i,j$.

\begin{proof}
We readily adapt the argument from the case $p=1/2$. Immediately before selecting the card labelled $i$ we have exactly $n-i$ cards in the pile. The $i$th card is placed at the bottom with probability $1-p$
or the top of the pile with probability $p$. If the card is placed at the top, we select $j-1$ cards from the remaining $i-1$ cards
and place them at the top, leading to a common factor $p^{j}(1-p)^{i-j}$ and the number of ways to choose the $j-1$ cards. If the card is placed at the bottom, we may choose $n-j$ cards from the remaining $i - 1$ cards and place them at the bottom. This gives a common factor $p^{i-1-(n-j)}(1-p)^{n-j+1}$, which takes into account that $i$ went to the bottom, as well as $n-j$ cards too, and the remaining cards to the top. As these events are disjoint, the result readily follows.
\end{proof}

For $\frac12\le p<1$ we can carry out analysis as before, as the optimal strategy stays the same, but for $p<\frac12$ the structure is asymptotically similar, but the optimal strategy changes, 
as we do not always guess the number one, as the guess depends on the value of $p$. 
Apparently, we have to study the equation $p=(1-p)^{n-1}$, allowing us to distinguish which label appear with the highest probability. Let $\nu=\lfloor \ln(p)/\ln(1-p)\rfloor+1$. Given a value of $n$ with $2\le n\le \nu$, we guess always the largest label $n$, but for all $n>\nu$ we guess the smallest label one. We note again that once card $n-1$ or $n$ is first shown, the player should guess the remaining cards in descending order, and is guaranteed to get these correct. After the first guess, we notice the label $j$ of the shown card. We now immediately know the positions of cards labelled one up to $j-1$. Concerning randomness we replace $n$ by $n-j$ and continue similar to starting with a set of $n-j$ cards by comparing $n-j$ to $\nu$ in order to make our choice for the next guess. Then, we continue the guessing, adding the number $j$ to our guess, as we actually
have the set of labels $j+1,\dots, n$ left. We continue until $n-1$ or $n$ is shown and then correctly guess all the remaining cards.

\begin{example}[Non-uniqueness of optimal strategy]
Consider the case of $n=4$ cards and the equation $p=(1-p)^3$. It has a real solution 
\[
p^{\ast}=
\frac{1}{6}\left(
2^{2/3} \sqrt[3]{\sqrt{93}-9}
- 2^{2/3} \sqrt[3]{9+\sqrt{93}}
+ 6
\right)\approx 0.31767<1/2
\]
and thus the optimal strategy is not unique, as we can either guess one or four. 
\end{example}

\begin{example}
Consider the case of $n=4$ cards. For any $p<p^{\ast}$ we have to guess the label four as the first guess.
\end{example}
We summarize the strategies in the following result.
\begin{theorem}[Optimal strategy for asymmetric shelf shuffle]
For $p\in[\frac12,1]$ we proceed as in the case $p=\frac12$:  we guess the card labelled one as the first guess and then always the card immediately following the previously shown card in all other positions. Once one of card $n-1$ or $n$ is first shown, the player should guess the remaining cards in descending order, and is guaranteed to get these correct.
For $p\in [0,\frac12)$ we guess the label $\maxP_n\in\{1,n\}$, corresponding to the maximum of of $\{p,(1-p)^{n-1}\}$. 
Let $\nu=\lfloor \ln(p)/\ln(1-p)\rfloor+1$. Then, we have 
\[
\maxP_n=
\begin{cases}
n,\quad 2\le n\le \nu,\\
1,\quad n>\nu.
\end{cases}
\]
After the card, say labelled $j$ is shown to the guesser, we proceed in a similar way, but with a reduced set of indices $\{1,\dots,n-j\}$ and guess the label corresponding to the maximum of $\{p,(1-p)^{n-j}\}$, shifted of course by $j$.
\end{theorem}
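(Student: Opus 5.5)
We prove the theorem with a conditioning argument on the first shown card, which reduces the game to smaller decks by induction on $n$. The one structural fact we need generalises the argument of Theorem~\ref{the:distDecomp}. Condition on the first card shown having label $j$. Then every card $i<j$ was placed at the bottom, and card $j$ was placed at the top. So the cards $1,\dots,j-1$ lie at the bottom of the deck in the known order, namely increasing label from the bottom. The cards $j+1,\dots,n$ were placed independently of this event, since each placement is an independent coin flip with parameter $p$. They therefore form an asymmetric shelf shuffle of a deck of $n-j$ cards with labels shifted by $j$, and this deck occupies positions $2,\dots,n-j+1$.

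Consequently, given the feedback so far, the conditional law of the unseen cards is always of the form ``a fresh asymmetric shuffle of $m$ cards, followed by a deterministic block''. Here $m$ is the number of labels not yet accounted for. The deterministic block is guessed correctly with certainty under any sensible strategy.

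Step one: determine the optimal strategy via Bellman's principle. Let $V_m$ denote the maximal expected number of correct guesses in the game with $m$ unresolved cards. After the first guess, the future value is $\E\big(V_{m-\fc_m}\big)$ plus the number of certified cards. By the structural fact, this continuation does not depend on what was guessed. The first guess only changes the immediate reward $\P\{\fc_m=g\}$. Hence it is optimal to guess greedily a label maximising the first-card probability from the lemma giving~\eqref{eq:fc2}. The candidates are $p(1-p)^{g-1}$ for $g\le m-1$, which is maximised at $g=1$ with value $p$, and $(1-p)^{m-1}$ for $g=m$. So $\maxP_m$ is the argmax of $\{p,(1-p)^{m-1}\}$, and an induction on $m$ finishes this step.

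Step two: evaluate the comparison. For $p\ge \frac12$ we have $p\ge 1-p\ge(1-p)^{m-1}$ for all $m\ge2$. This gives the guess $1$, with ties only at $p=\frac12$, $m=2$, where both guesses are optimal. Since $\ln(1-p)<0$, the condition $(1-p)^{m-1}>p$ is equivalent to $m-1<\ln p/\ln(1-p)$, that is $m\le\nu$ with $\nu=\lfloor \ln p/\ln(1-p)\rfloor+1$, up to the boundary tie when $\ln p/\ln(1-p)$ is an integer, as in the example. Finally, once card $m-1$ or $m$ of the current subdeck is shown, the remaining subdeck is empty or a single card. All remaining positions are then determined and are guessed in descending order.

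The main obstacle is making the conditional independence in the structural fact rigorous when many cards have already been revealed. One must check that conditioning on the whole revealed history, rather than only on the first card, still leaves a fresh shuffle on the unresolved labels. I would handle this by induction, applying the single-step fact to the subdeck at each stage. This works because the subdeck's own coin flips are untouched by the earlier revelations.
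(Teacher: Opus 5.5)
Your proposal is correct and follows the paper's route: the first-card law, a greedy comparison of $p$ with $(1-p)^{m-1}$ resolved through $\nu$, and recursion on the fresh $(n-j)$-card subdeck. Your decoupling argument (the continuation value does not depend on the current guess) and your conditional-independence induction rigorously justify steps the paper only sketches. Your recursion guesses $\maxP_{n-j}$, i.e.\ it compares $p$ with $(1-p)^{n-j-1}$, which agrees with the paper's text ``comparing $n-j$ to $\nu$''. So the exponent $n-j$ in the statement's last sentence is an off-by-one slip, and your argument implicitly corrects it.
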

Thus, the distributional equation has a slightly different form. We can group the two cases together by using a suitably defined indicator variable. 

\begin{theorem}
The random variable $X=X_n$ of correctly guessed cards, starting with a deck of $n$ cards, after an asymmetric one-time shelf shuffle with parameter $p \in(0,1)$ satisfies for $n\ge 2$ the following distributional equation:
\begin{equation}
\label{eqn:Xn_DistEqn3}
X_n \law X^{\ast}_{n-\fc_n}+\fc_n-1 + \mathbb{I}_n(\fc_n=\maxP_n), 
\end{equation}
with initial values $X_1=1$ and $X_0=0$ and the random variable $\fc$ as stated in~\eqref{eq:fc}.
Here, $\mathbb{I}_n(\fc_n=\maxP_n)$ denotes the indicator of the event that the guess is correct and $X^{\ast}$ denotes an independent copy of $X$ and the random variables appearing in the decomposition 
are mutually independent.
\end{theorem}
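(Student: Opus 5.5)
We condition on the first card $\fc_n$ that appears on top of the shuffled deck and follow the optimal strategy from the preceding theorem. The first guess is $\maxP_n$, so it is correct exactly when $\fc_n=\maxP_n$. This contributes the summand $\mathbb{I}_n(\fc_n=\maxP_n)$.

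\textbf{Step 1: the certified guesses.} Suppose the first shown card is $j=\fc_n$. The cards are pushed onto the pile from the bottom of the ordered deck, so cards $j+1,\dots,n$ are placed before card $j$, and cards $1,\dots,j-1$ after it. Card $j$ ends on top, so it was placed at the top (or $j=n$), and each of the $j-1$ later cards $j-1,j-2,\dots,1$ was placed at the bottom. The bottom $j-1$ positions of the final deck are therefore occupied by cards $1,\dots,j-1$. They are read from the top in increasing order, so card $1$ is the very last card, since it was placed last at the bottom. Hence these $j-1$ cards are located deterministically and yield $j-1$ certified correct guesses at the end. This contributes the term $\fc_n-1$. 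For $j\in\{n-1,n\}$ the whole deck is determined, which is consistent with the rule in the optimal strategy.

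\textbf{Step 2: the remaining block.} Between the top card and the bottom block lie cards $j+1,\dots,n$. Their relative order is produced by the top/bottom coin tosses of cards $n,\dots,j+1$ alone. The tosses are independent, and the event $\{\fc_n=j\}$ only involves the tosses of cards $1,\dots,j$ (together with the convention for card $n$). Conditionally on $\fc_n=j$, this block is therefore distributed as an asymmetric single-shelf shuffle of $n-j$ cards, after relabelling $j+k\mapsto k$. The guesser's feedback on this block is the same as in a fresh game, and the optimal strategy restarts on $n-j$ cards with the threshold comparison $n-j$ versus $\nu$, as stated in the theorem on optimal strategies. So the number of correct guesses there is an independent copy $X^{\ast}_{n-j}$.

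One detail needs care. After the first card, the guesser does not yet know where the middle block ends and the bottom block begins. However, the next card shown is the top of the middle block, and the guesser simply guesses $j+\maxP_{n-j}$. The strategy never gains anything by guessing a card from $1,\dots,j-1$ before the block is exhausted, since those positions are known to come last. Summing the three contributions and using independence of the disjoint sets of tosses gives \eqref{eqn:Xn_DistEqn3}, with $\fc_n$ distributed as in \eqref{eq:fc2}. The initial values $X_0=0$ and $X_1=1$ are immediate.

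\textbf{Main obstacle.} The delicate point is Step 2. We must check rigorously that the conditional law of the middle block given $\fc_n=j$ is exactly the shuffle on $n-j$ cards, including the boundary case $j=n-1$ where the block is empty. We must also check that the conditional law of $\fc_n$ at the top is the asymmetric one from \eqref{eq:fc2}, and not that of the symmetric case. I would verify this by writing the shuffle as the independent toss sequence $(B_n,\dots,B_1)$ and observing that $\{\fc_n=j\}=\{B_j=\text{top},\,B_{j-1}=\dots=B_1=\text{bottom}\}$, which is measurable with respect to tosses disjoint from those determining the block.
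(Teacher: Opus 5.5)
Your proof is correct and follows essentially the same route as the paper. The paper gives no separate proof here but adapts the argument of Theorem~\ref{the:distDecomp}: condition on the first card $\fc_n=j$, note that cards $1,\dots,j-1$ then occupy known positions (certified guesses), and that cards $j+1,\dots,n$ form a fresh shuffle of $n-j$ cards, since they depend only on coin tosses disjoint from those determining $\{\fc_n=j\}$ --- a point you make explicit where the paper merely asserts it. Your slips are only cosmetic: read from the top, the bottom block is $j-1,j-2,\dots,1$ (decreasing, not increasing); the guesser does know that the middle block ends after exactly $n-j$ cards; and the case with an empty middle block is $j=n$, not $j=n-1$.
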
 
As mentioned before, for $p\ge \frac12$ we always have $\maxP_n=1$, and the distributional equation can be written as
\begin{equation}
\label{eqn:Xn_DistEqn2}
X_n \law I_1 \big(X^{\ast}_{n-1}+1\big)+(1-I_1)(X^{\ast}_{n-J_n}+J_n-1), 
\end{equation}
with $I_1\law \Be(p)$ denoting a Bernoulli distribution and initial values $X_1=1$ and $X_0=0$ and 
\[
\P\{J_n=i\}=p(1-p)^{i-2},\quad 2\le i\le n-1,\quad \P\{J_n=n\}=(1-p)^{n-2}.
\]
The generating functions approach works directly \emph{exactly the same} and we obtain for 
\[
S(z,v)=S(z,v;p)=\sum_{n\ge 1}\E(v^{X_n})z^n
\]
the following result.
\[
S(z,v)=\frac{-z\big(1 + (v - 1)(p - 1)z\big)v}{zv-1 + pv(v - 1)(p - 1)z^2 }
\]
We obtain by extraction of coefficients from $\partial_v S(z,1)$ and $\partial_v^2 S(z,1)$ for the expected value and the variance for $n\ge 3$ and $\frac12\le p<1$ the results
\begin{align*}
\E(X_n)&=(1-p+p^2)n+3p-1-2p^2,\\
\Var(X_n)&=(1-p)p(3p^2-3p+1)n+p(1-p)(10p-8p^2-3).
\end{align*}
The denominator is readily factorized and we obtain the dominant singularity $\rho(v)$, depending on $\frac12\le p<1$
\[
\rho(v)=
\frac{-v + \sqrt{(1+4p(p-1))(v^2 - v)+v}}{2vp(pv - p - v + 1)}.
\]
We can readily obtain a central limit theorem using the quasi-power theorem, such that
\[
\frac{X_n-\mu_n}{\sigma_n}\to\mathcal{N}(0,1),
\]
with $\mu_n=\E(X_n)$, $\sigma_n^2=\Var(X_n)$; details are omitted. Of interest is also the transition at $p=1$. 
Here, we deterministically have $X_n=n$. Thus, we set $p=1-\frac{\lambda}{n^{\alpha}}$, with real $\alpha,\lambda>0$, which approaches one as $n$ tends to infinity. The transition can be easily observed by looking at the probability $\P\{X_n=n\}$, as it satisfies for $p=1-\frac{\lambda}{n^{\alpha}}$, $\alpha,\lambda>0$ and $n\to \infty$ the expansions
\[
\P\{X_n=n\}\to
\begin{cases}
1,\quad 0<\alpha<1,\\
e^{-\lambda},\quad \alpha=1,\\
0,\quad \alpha\ge 1,
\end{cases}
\]
which follows directly from the explicit formula $\P\{X_n=n\}=p^n$ and the $\exp-\log$ expansion.
More refined, we additionally anticipate here a phase transition for the shifted random variable $Z_n=n-X_n$ depending on the value of $\alpha$ from degenerated, to a discrete distribution, 
and then to a normal distribution. We expect a classical $\Po(\lambda)$ discrete limit in the case $\alpha=1$,
motivated by the corresponding expansions of the expected value and the variance in this range
\[
\E(X_n)\sim n-c+\mathcal{O}\big(\frac1n\big),\quad \Var(X_n)\sim c+\mathcal{O}\big(\frac1n\big)
\]
This can be made precise by a closer inspection of the probability generating function using the dominant singularity; details are again omitted.

\smallskip 

In the case of $0<p<\frac12$ the analysis is more involved, depending on $\nu=\lfloor \ln(p)/\ln(1-p)\rfloor+1$, where the optimal strategy changes.
First, we note that the distributional equation can be written as
\begin{equation}
\label{eqn:Xn_DistNu1}
X_n \law I_1 X^{\ast}_{n-1}+(1-I_1)\big(X^{\ast}_{n-J_n}+J_n-1+\mathbb{I}_n(J_n=n)\big), \quad n\le \nu,
\end{equation}
and for $n\ge \nu$ by 
\begin{equation*}
X_n \law I_1 \big(X^{\ast}_{n-1}+1\big)+(1-I_1)(X^{\ast}_{n-J_n}+J_n-1).
\end{equation*}
For the interested reader we sketch how to determine the distribution of $X_n$. In order to solve this, we introduce an auxiliary random variable $Y_n$, which satisfies~\eqref{eqn:Xn_DistNu1} for all $n\ge 2$.
Consequently, 
\[
X_n\law Y_n\,\text{ for } \,1\le n\le \nu;\qquad X_n\nlaw Y_n\, \text{ for } n>\nu. 
\]
Let $T=T(z,v)=\sum_{n\ge 1}\E(v^{Y_n})z^n$. We obtain the equation 
\[
T-zv=zpT + \frac{zpT}{1-zv(1-p)}-zpT+\frac{z^2v^2(1-p)}{1vz(1-p)},
\]
which leads to a very simple explicit expression for $T$:
\[
T(z,v)=\frac{z v} {1-z\big(p+(1-p)v\big)},
\]
such that $Y_n$, and consequently also $X_n$ for $n\le \nu$, is given by a sum of $n-1$ iid Bernoulli random variables $B_k\law \Be(1-p)$ shifted by one:
\[
X_n\law 1+\sum_{k=1}^{n-1}B_k\law 1+\Bin(n-1,1-p),\quad 1\le n\le \nu.
\]
Then, we use again generating functions for treating~\eqref{eqn:Xn_DistEqn3}.
As before, let $S=S(z,v)=\sum_{n\ge 1}\E(v^{X_n})z^n$, which splits into two parts
\begin{align}
\label{eqn:decompo}
S(z,v)=S_{\le \nu}(z,v)+S_{>\nu}(z,v)=\sum_{n=1}^{\nu}\E(v^{X_n})z^n+\sum_{n> \nu}\E(v^{X_n})z^n,
\end{align}
with $S_{\le \nu}(z,v)=\sum_{n=1}^{\nu}\E(v^{X_n})z^n$ already known from $T(z,v)$:
\begin{equation}
\label{eqn:Lenny}
S_{\le \nu}(z,v)=\sum_{n=1}^{\nu}\E(v^{Y_n})z^n=zv\frac{1-z^{\nu}\big(p+(1-p)v\big)^{\nu}}{1-z\big(p+(1-p)v\big)}.
\end{equation}
From~\eqref{eqn:Xn_DistEqn3} we obtain
the equation
\begin{align*}
S(z,v)-zv&=pz\big(S_{\le \nu}(z,v)-(1-v)s_{\nu}(v)+v S_{>\nu}(z,v)\big)\\
&\quad+ \frac{zpS(z,v)}{1-zv(1-p)}-z p S(z,v)+\sum_{n\ge 2}(1-p)^{n-1}v^{n-1+\mathbb{I}_n(n\le \nu)}z^n.
\end{align*}
This allows to characterize $S_{>\nu}(z,v)$, and thus the distribution of $X_n$
in terms of the $S_{\le \nu}(z,v)$~\eqref{eqn:Lenny} and $s_{\nu}(v)=v(p+v(1-p))^{\nu-1}$:
\begin{align*}
S_{>\nu}(z,v)&=\frac{zv +(pz-1) S_{\le \nu}(z,v)-p(1-v)zs_{\nu}(v) + \frac{zpS_{\le\nu}(z,v)}{1-zv(1-p)} -zpS_{\le\nu}(z,v)
}{1-pzv - \frac{zp}{1-zv(1-p)}+z p}\\
&\quad +\frac{1}{1-pzv - \frac{zp}{1-zv(1-p)}+z p}\sum_{n\ge 2}(1-p)^{n-1}v^{n-1+\mathbb{I}_n(n\le \nu)}z^n.
\end{align*}

\subsection{Refined analysis of the ordinary shelf shuffle}
As outlined in the introduction, we turn to a refined analysis of $X_n$, by keeping track of the pure luck $L_n$ and the certified correct guesses $C_n$. We continue and refine Example~\ref{ex:1}.

\begin{example}
\label{ex:2}
If $n = 20$ the first cards that come out are, in order, 1, 2, 5, 10, 11, 19, the player will have guessed cards 1, 2, 3, 6, 11, 12
under the optimal strategy and have gotten cards 1, 2, 11 correct. After this, the player will guess cards 20, 18, 17, 16, 15, 14, 13, 12, 9, 8, 7, 6, 4, 3 and get them all correct. Thus, here we have three pure luck guesses and 14 certified correct guesses.
\end{example}

Of course, the distributional equation of Theorem~\ref{the:distDecomp} is readily refined to keep track of the random vector $(L_n,C_n)$:
\begin{equation}
\label{eqn:DistNew1}
\veca{L_n}{C_n} \law I_1 \veca{L^{\ast}_{n-1}+1}{C^{\ast}_{n-1}} +(1-I_1)\veca{L^{\ast}_{n-J_n}}{C^{\ast}_{n-J_n}+J_n-1}, 
\end{equation}
for $n\ge 2$ with $L_1=L_0=0$, $C_1=1$ and $C_0=0$. Here, we have again
\[
\P\{J_n=i\}=\frac{1+\delta_{i-n}}{2^{i-1}},\quad 2\le i\le n.
\]
Let $s_n(v,w)=\E(v^{L_n}w^{C_n})$, such that 
\[
s_n(v,v)=\E(v^{L_n+C_n})=\E(v^{X_n}).
\]
We obtain for $n\ge 2$ by taking expectations the recurrence relation
\begin{equation}
\label{eq:recRefined1}
s_n(v,w)=\frac12 v s_{n-1}(v,w)+\frac12\sum_{k=2}^{n-1}\frac1{2^{k-1}}w^{k-1}s_{n-k}(v,w) + \frac12\cdot \frac1{2^{n-1}}\cdot 2s_0(v,w)w^{n-1},
\end{equation}
with initial values $s_0(v,w)=1$ and $s_1(v,w)=w$. Let $S_(z,v,w)$ denote the generating function of the $s_n(v,w)$, defined by
\[
S(z,v,w)=\sum_{n\ge 1}\E(v^{L_n}w^{C_n})z^n.
\]
Summing up over $n\ge 2$ translates the recurrence relation to the equation
\[
S(z,v,w)-zw=\frac{zv}2\cdot S(z,v)+\frac{z}2 \frac{S(z,v,w)}{1-\frac{zw}2}-\frac12 S(z,v,w)+\frac{z^2w/2}{1-zw/2},
\]
leading to the following result.

\begin{lem}
\label{lem:gfMultiVar}
The trivariate generating function $S(z,v,w)$ of the number of correct guesses decomposed into pure luck guesses and certified correct guesses after a single shelf shuffle with complete feedback is given by 
\[
S(z,v,w)=\frac{zw+\frac{\frac{z^2w}2}{1-\frac{zw}2}}{1-\frac{zv}2-\frac{\frac{z}2}{1-\frac{zw}2}
+\frac{z}2}=\frac{2zw\big(2 + (1 - w)z\big)}{4 + w(v - 1)z^2 -2z(v +w)}.
\]
\end{lem}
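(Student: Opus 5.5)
The plan is to repeat the argument of Lemma~\ref{lem:gf} one level finer, starting from the recurrence~\eqref{eq:recRefined1}, which follows from the distributional equation~\eqref{eqn:DistNew1} by taking expectations of $v^{L_n}w^{C_n}$.

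First, I would multiply~\eqref{eq:recRefined1} by $z^n$ and bring it into convolution form:
\begin{align*}
s_n(v,w)z^n&=\frac{zv}2\, s_{n-1}(v,w)z^{n-1}+\frac{z}2\sum_{k=1}^{n-1}\Big(\frac{zw}{2}\Big)^{k-1}s_{n-k}(v,w)z^{n-k}\\
&\quad-\frac{z}2\, s_{n-1}(v,w)z^{n-1}+z\Big(\frac{zw}{2}\Big)^{n-1}.
\end{align*}
Two points in this step need care. The $k=1$ term is added to the convolution and subtracted again, so the inner sum starts at $k=1$. The boundary term with $k=n$ carries the extra factor $2$ from $1+\delta_{i,n}$; one copy is absorbed as the $s_0$-contribution and the other appears separately. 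Because $s_0=1$ is not included in $S$, I would treat it as producing exactly the free term $z(zw/2)^{n-1}$.

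Next, I would sum over $n\ge 2$. The left side gives $S(z,v,w)-zw$, using $s_1=w$. The first term gives $\frac{zv}{2}S$. The convolution gives $\frac{z}{2}\,S/(1-\frac{zw}{2})$, since every index $n-k\ge1$ is covered exactly once. The correction term gives $-\frac z2 S$, and the geometric tail gives $\frac{z^2w/2}{1-zw/2}$. Solving this linear equation for $S$ yields the first displayed expression of the lemma. (The corresponding displayed equation in the paper should read $\frac{zv}{2}S(z,v,w)$ and $-\frac z2 S(z,v,w)$.)

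Finally, I would simplify by multiplying numerator and denominator by $2(1-\frac{zw}{2})=2-zw$. The numerator becomes $zw(2-zw+z)=zw\big(2+(1-w)z\big)$. For the denominator I would use
\[
\Big(1-\frac{zv}{2}+\frac z2\Big)(2-zw)-z=2-z(v+w)+\frac{w(v-1)z^2}{2}.
\]
Doubling both numerator and denominator then gives
\[
\frac{2zw\big(2+(1-w)z\big)}{4+w(v-1)z^2-2z(v+w)}.
\]

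The only genuine obstacle is the bookkeeping of the boundary terms: the $k=1$ versus $k\ge 2$ split and the doubled last term with $s_0$. As a consistency check, setting $v=w$ should recover Lemma~\ref{lem:gf}, because $L_n+C_n=X_n$. Indeed, $4-4vz+v(v-1)z^2$ should match that denominator; this also shows that the denominator in Lemma~\ref{lem:gf} should carry $z^2$ rather than $z$. A second check is that the coefficients of small powers of $z$ reproduce $s_1=w$ and $s_2=\frac12(vw+w)$ computed directly from~\eqref{eqn:DistNew1}.
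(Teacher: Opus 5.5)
Your proposal is correct and follows the paper's own route: you multiply the refined recurrence~\eqref{eq:recRefined1} by $z^n$, write the $k\ge 2$ sum as a full convolution minus its $k=1$ term, sum over $n\ge 2$, and simplify. The whole $k=n$ term, with both copies from $1+\delta_{i,n}$, becomes the free term $z(zw/2)^{n-1}$, which is what your display shows even though your wording about one copy being ``absorbed'' is slightly muddled. Your typo corrections are also right: the summed equation should read $\frac{zv}{2}S(z,v,w)$ and $-\frac z2 S(z,v,w)$, and the denominator in Lemma~\ref{lem:gf} should be $4-4vz+(v^2-v)z^2$.
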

From this generating function we readily obtain various results. 
First, we turn to the expectation, variance and covariance of $L_n$ and $C_n$. 
They are obtained by extraction of coefficients from 
\[
\partial_vS(z,1,w),\quad \partial_wS(z,v,1),
\]
as well as
\[
\partial_v^2S(z,1,w),\quad \partial_w^2S(z,v,1),\quad \partial_w\partial_v S(z,1,1),
\]
using the identity~\eqref{eqn:Var}. We omit the computations.
\begin{theorem}
\label{the:refined}
The random variables $L_n$ and $C_n$, counting the pure luck $L_n$ and the certified correct guesses $C_n$ 
in the card guessing game with full feedback after a single shelf shuffle 
satisfy
\[
\E(L_n)=\frac{n}4,\quad \E(C_n)=\frac{n}2,\quad n\ge 2.
\]
Moreover,
\[
\Var(L_n)=\frac{5n}{16},\quad \Var(C_n)=\frac{n}4,\quad \Cov(L_n,C_n)=-\frac{n}4,\quad n\ge 3.
\]
\end{theorem}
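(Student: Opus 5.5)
The plan is to extract all five quantities directly from the closed form of Lemma~\ref{lem:gfMultiVar},
\[
S(z,v,w)=\frac{N(z,w)}{D(z,v,w)},\qquad N=2zw\big(2+(1-w)z\big),\quad D=4+w(v-1)z^2-2z(v+w),
\]
and to use the already established moments of $X_n=L_n+C_n$ to cut the work roughly in half. At $v=w=1$ we have $N=4z$ and $D=4-4z$, so $S(z,1,1)=z/(1-z)$, which is the expected normalisation. All derivatives evaluated at $v=w=1$ will therefore be rational functions whose only pole is at $z=1$. Coefficients then follow from partial fractions and the identity $\frac{1}{(1-z)^{m+1}}=\sum_{n\ge0}\binom{n+m}{m}z^n$, exactly as in the derivation of $\E(X_n)$ and $\Var(X_n)$.

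\textbf{First moments.} Since $N$ does not depend on $v$, we get $\partial_v S=-N\,\partial_v D/D^2$ with $\partial_v D=wz^2-2z$. At $v=w=1$ this gives
\[
\partial_v S(z,1,1)=\frac{z^2(2-z)}{4(1-z)^2}.
\]
Reading off coefficients yields $\E(L_n)=\tfrac14\big(2(n-1)-(n-2)\big)=\tfrac n4$ for $n\ge2$. Instead of differentiating in $w$, I will use $\E(C_n)=\E(X_n)-\E(L_n)=\tfrac34 n-\tfrac14 n=\tfrac n2$, which is valid for $n\ge2$ by the previous theorem. As a sanity check, one can still compute $\partial_w S(z,1,1)$ directly.

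\textbf{Second moments.} I will compute $\partial_v^2 S(z,1,1)=2N(\partial_v D)^2/D^3$; the simplification comes from $\partial_v^2 D=0$. Up to the constant $2/64$, this equals $z^3(z-2)^2/(1-z)^3$. Extracting the coefficient gives $\E(\fallfak{L_n}2)$, and then $\Var(L_n)=\E(\fallfak{L_n}2)+\E(L_n)-\E(L_n)^2$ by~\eqref{eqn:Var}. The $n^2$ terms must cancel, and the claim is that what remains is $5n/16$ once $n\ge3$, when the polynomial part of the partial fraction expansion no longer contributes. Next I compute the mixed derivative $\partial_w\partial_v S(z,1,1)$, which gives $\E(L_nC_n)$ and hence $\Cov(L_n,C_n)=\E(L_nC_n)-\E(L_n)\E(C_n)$, expected to be $-n/4$. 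Finally, $\Var(C_n)$ follows without further differentiation from
\[
\Var(X_n)=\Var(L_n)+\Var(C_n)+2\Cov(L_n,C_n),
\]
giving $\Var(C_n)=\tfrac n{16}-\tfrac{5n}{16}+\tfrac{n}{2}=\tfrac n4$. As an independent check, I can compute $\partial_w^2 S(z,1,1)$ directly.

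\textbf{Main obstacle.} The only genuinely laborious step is the mixed derivative. Both $N$ and $D$ depend on $w$, so the quotient rule generates several terms, including $\partial_w N$, $\partial_w D$ and $\partial_w\partial_v D=z^2$. These must be combined over $(1-z)^3$ before partial fractions, and the bookkeeping should be delegated to a computer algebra system, as the paper does elsewhere. The other delicate point is confirming the small-$n$ thresholds ($n\ge2$ for means, $n\ge3$ for second moments), which come from the polynomial parts of degree at most $2$ in the expansions. These I will verify by checking $n=1,2,3$ directly against the recurrence~\eqref{eq:recRefined1}.
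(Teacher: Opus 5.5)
\textbf{The proposal cannot succeed, because the second-order identities in the statement are false.} Your method is the same as the paper's: extract coefficients from the closed form of Lemma~\ref{lem:gfMultiVar}. Your first-moment part is correct, since $[z^n]\frac{z^2(2-z)}{4(1-z)^2}=\frac n4$ for $n\ge2$, and hence $\E(C_n)=\frac n2$. The gap is in the second-order part. There you state what the computations will give without doing them, and those stated results are wrong.

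Carrying out your own plan, first note that $N(z,1)=4z$, so the constant is $8/64$, not $2/64$. You then get
\[
\partial_v^2S(z,1,1)=\frac{z^3(2-z)^2}{8(1-z)^3},\qquad \partial_v\partial_wS(z,1,1)=\frac{z^2(4-6z+5z^2-z^3)}{8(1-z)^3}.
\]
For $n\ge3$ these give $\E(\fallfak{L_n}{2})=\frac{n^2+n-4}{16}$ and $\E(L_nC_n)=\frac{n^2-2n+3}{8}$. Hence
\[
\Var(L_n)=\frac{5n-4}{16},\qquad \Cov(L_n,C_n)=-\frac{2n-3}{8},\qquad \Var(C_n)=\frac{n}{16}-\frac{5n-4}{16}+\frac{2n-3}{4}=\frac{n-2}{4}.
\]

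The $n=3$ check you planned exposes this at once. The four equally likely shuffles give $(L_3,C_3)\in\{(2,1),(1,1),(0,2),(0,2)\}$. So $\Var(L_3)=\frac{11}{16}$, $\Var(C_3)=\frac14$ and $\Cov(L_3,C_3)=-\frac38$, not $\frac{15}{16}$, $\frac34$ and $-\frac34$. Similarly, the eight outcomes for $n=4$ give $1$, $\frac12$ and $-\frac58$, not $\frac54$, $1$ and $-1$.

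So the three second-order identities fail as exact equalities for $n\ge3$. Only their leading terms are right: $\Var(L_n)\sim\frac{5n}{16}$, $\Var(C_n)\sim\frac n4$ and $\Cov(L_n,C_n)\sim-\frac n4$. The paper omits these computations, so its argument carries the same error; no proof of the statement as written is possible. What your method actually proves is the corrected version above, which is still consistent with $\Var(X_n)=\frac n{16}$.
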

Finally, we note that the denominator of $S(z,v,w)$ has a particularly simple form:
\begin{equation*}
\begin{split}
 w(v - 1)z^2 -2z(v +w)+4&= w(v - 1)\big(z-\rho_1(v,w)\big)\big(z-\rho_2(v,w)\big)\\
&=4\big(1-z/\rho_1(v,w)\big)\big(1-z/\rho_2(v,w)\big),
\end{split}
\end{equation*}
with
\[
\rho_1(v,w)=\frac{v+w-\sqrt{(v-w)^2+4w}}{(v-1)w},\quad \rho_2(v,w)=\frac{v+w+\sqrt{(v-w)^2+4w}}{(v-1)w}.
\]
The dominant singularity around $v=w=1$ is $\rho=\rho_1(v,w)$, which satisfies 
\[
\lim_{v,w\to 1}\rho(v,w)=1.
\]
Here, one may apply the higher dimensional quasi-power theorem analogs of Heuberger and Kropf~\cite{HeubergerKropf2018}, 
leading to a central limit theorem, with variance-covariance governed by~\eqref{the:refined}. We omit the details.

\subsection{Refined analysis of the asymmetric shelf shuffle}
Finally, we note that both ideas, asymmetric shelf shuffle and refined number of correct guesses, can be combined. 
For $\frac12\le p<1$ this leads to the distributional equation
\begin{equation}
\label{eqn:DistNew2}
\veca{L_n}{C_n} \law I_1 \veca{L^{\ast}_{n-1}+1}{C^{\ast}_{n-1}} +(1-I_1)\veca{L^{\ast}_{n-J_n}}{C^{\ast}_{n-J_n}+J_n-1}, 
\end{equation}
for $n\ge 2$ with $L_1=L_0=0$, $C_1=1$ and $C_0=0$, and $I_1=\Be(p)$. We only state the corresponding generating function
$S(z,v,w)=S(z,v,w,p)$:
\[
S(z,v,w)=\frac{zv\big(1 + (v - 1)(-1 + p)z\big)}{1 - pv(w - 1)(-1 + p)z^2 - (1 - p)v + wpz},
\]
from which results for the expected values, variances and the covariance can readily be obtained, as well as distributional results. It is also possible to extend this analysis to the case $0<p<\frac12$.

\section{Outlook and summary}
We considered a card guessing game with complete feedback after a single shelf shuffle. We revisited the optimal strategy for maximizing the number of correct guesses and carried out a complete distributional analysis. Concerning generalizations and extensions for future research, 
we outlined how to extend the analysis to an asymmetric shelf shuffle, as well as to a refined number of correct guesses. 

\smallskip 

A corresponding analysis of the guessing game without feedback
seems to be more involved, insofar as even the optimal strategy is unclear. A spectral analysis of the position matrix for the asymmetric model is doable. Of interest is also the transition for $p\to 0$ in the asymmetric model and to look again for transitions from normal to say Poisson or degenerated.

\section*{Declarations of interest and Acknowledgments}
There are no competing financial or personal interests that influenced the work reported in this paper. 

\bibliographystyle{cyrbiburl}
\bibliography{CardGuessingShelf-refs}{}


\end{document}